\definecolor{darkgr}{rgb}{0.0, 0.62, 0.42}
\numberwithin{equation}{section}
\newcommand{\Span}{\mathrm{span}}
\newtheorem{lemma}{Lemma}[section]
\newtheorem{theorem}[lemma]{Theorem}
\newtheorem{proposition}[lemma]{Proposition}
\newtheorem{remark}[lemma]{Remark}
\newtheorem{definition}[lemma]{Definition}
\newcommand{\csi}{\xi}
\newcommand{\calc}{{\cal C}}
\def\mass{\mathcal M \kern -2pt a \kern-2pt d}
\def\im{{\rm i}}
\def\cI{{\mathcal{I}}}
\def\cC{{\mathcal{C}}}
\def\cF{{\mathcal{F}}}
\def\cE{{\mathcal{E}}}
\def\cW{{\mathcal{W}}}
\def\cU{{\mathcal{U}}}
\def\cB{{\mathcal{B}}}
\newcommand{\ep}{\epsilon}
\newcommand{\varep}{\varepsilon}
\def\scal#1#2{\left(#1;#2\right)}
\newcommand{\R}{\mathbb R}
\newcommand{\C}{\mathbb C}
\newcommand{\Z}{\mathbb Z}
\newcommand{\N}{\mathbb N}
\newcommand{\T}{\mathbb T}
\newcommand{\Op}{{Op}^{W}}
\newcommand{\OPS}{{OPS}_{\delta}}
\def\green#1{\relax}
\def\red#1{{#1}}
\def\blu#1{{#1}}
\newcommand{\nform}{z}
\newcommand{\NForm}{Z}
\newcommand{\sobop}[3]{\left\| {#1}\right\|_{ {#2}, {#3}} }
\newcommand{\res}{(\textrm{res})}
\newcommand{\nr}{(\textrm{nr}
	)}
\newcommand{\partialt}{\partial_{t}}
\newcommand{\id}{\textrm{Id}}
\newcommand{\cinfty}{{\cal C}^{\infty}}
\def\be{{\bf e}}
\newcommand{\mort}{{\cal I}_{M}}
\newcommand{\timereg}{\cinfty_b}
\newcommand{\simb}{\textrm{S}_\delta}
\def\mper{M^{\perp}}
\newcommand{\csiort}{{\csi_{M^\bot}}}
\def\growth{\cite{growth}}
\def\stimevecchie{\cite{How89, How92, joy,nen,barjoy, joy2,duclos}}
\def\ac{{\mathcal A}\kern-.7pt\ell\kern-.9pt\mathcal{S}}
\begin{document}

\title{Growth of Sobolev norms for
  \red{unbounded perturbations of the Schr\"odinger equation} on flat tori}
\date{}

\author{ Dario Bambusi\footnote{Dipartimento di Matematica, Universit\`a degli Studi di Milano, Via Saldini 50, I-20133
Milano. 
 \textit{Email: } \texttt{dario.bambusi@unimi.it}}, Beatrice Langella\footnote{International School for Advanced Studies (SISSA), via Bonomea 265, I-34136 Trieste.
 \textit{Email: } \texttt{beatrice.langella@sissa.it}}, Riccardo Montalto \footnote{Dipartimento di Matematica, Universit\`a degli Studi di Milano, Via Saldini 50, I-20133
Milano.
 \textit{Email: } \texttt{riccardo.montalto@unimi.it}}
}

\maketitle

\begin{abstract} We prove a $\langle t\rangle^\varepsilon$ \red{upper} bound on the
  growth of Sobolev norms \blu{for all solutions of Schr\"odinger
    equations  on flat tori with a Hamiltonian which is an unbounded time dependent perturbation of the
      Laplacian.}
\end{abstract} \noindent


\medskip

\noindent
{\em MSC 2010:} 35Q41, 47G30, 37K55



\section{Introduction}\label{intro}

We consider the Schr\"odinger equation
\begin{equation}\label{main.general}
\im \partialt \psi = H{(t)}\psi\,, \quad H{(t)}= -\Delta + V(t) \quad \quad \psi
\in L^2(\T^d_\Gamma)\,,
\end{equation}
where $V(t)$ is a smooth family of time dependent self-adjoint unbounded pseudo-differential operators of order $m <2$ and $\T^d_\Gamma =
\R^d/\Gamma$ is a torus with arbitrary periodicity lattice
$\Gamma$. We prove a $ \langle t\rangle^\varepsilon$ \blu{upper} bound on the growth
of the Sobolev norms of the solution. As far as we know this is the first
result of this kind for \emph{unbounded perturbations of the Laplacian
  on a manifold different from a Zoll manifold}.

We recall that previous results ensuring $\langle
t\rangle^\varepsilon$ \blu{bounds on the}
growth of Sobolev norms for time dependent Schr\"odinger equations on tori were proved in \cite{Bou,Delort,BM19} for \emph{bounded}
perturbations of the Laplacian and they were based on the use of a
lemma by Bourgain on the structure of the ``resonant clusters'' of a
suitable lattice.

Our proof follows the general strategy of
    \growth\ and consists of three steps.  In the first
step we use a normal form approach to conjugate \eqref{main.general}
to an equation which is a smoothing perturbation of a suitable normal
form equation. The second step consists of the analysis of the
dynamics generated by the normal form operator, and in the third step
we add the smoothing perturbation and obtain our main result. 

The main technical novelty of the present paper is that the first and the second
parts of the proof are based on a local formulation of normal form
theory which was developed in \cite{risonante} in order to deal with
spectral problems.  Actually the present paper originates from the
understanding that the main structural result of \cite{risonante} can
be used in order to deal with the problem of growth of Sobolev
norms. Indeed the structural theorem of \cite{risonante} provides
a decomposition of the Fourier space in dyadic blocks which are invariant for
operators in normal form\footnote{according to Definition
  \ref{def.nor} below} and these blocks are similar to the one
introduced by Bourgain to deal with the case of bounded
perturbations. The main point of our construction is that, since it is
based on pseudo-differential calculus, one can control the
unboundedness of the perturbation exploiting the fact that the
commutator of pseudo-differential operators is more regular than the
original factors.  The result of this part of the proof is a
Lemma ensuring that, in the dynamics of the normal form equations, the
Sobolev norms of the solutions are bounded uniformly in time.  Finally
the third step of the proof consists in adding the remainder, using
Duhamel formula and a standard interpolation result to get the
$\langle t\rangle^\varepsilon$ bound.

We conclude this introduction by recalling that the problem of
understanding the flow of energy to high frequency modes in time
dependent linear equations has a long history and has been
investigated mainly in the context of time periodic or quasiperiodic
perturbations of harmonic or anharmonic quantum oscillators. We first
mention the works \stimevecchie\ in which perturbations of 1-d
oscillators were studied and estimates on the transfer of energy
between low and high frequency modes were given. Then, in the works
\cite{BG01,2, 1,3} the related problem of reducibility {for harmonic oscillators} was studied in
the 1-d case with unbounded perturbations. For the higher dimensional
case only a few reducibility results are known:  the first
breakthrough result has been proved by Eliasson and Kuksin
\cite{EK09}, in the case of the Schr\"odinger equation on $\T^d$ with
a bounded analytic time quasi-periodic potential; in the case of
unbounded potentials, finally we mention \cite{BGMR1, BLM19, Mon17, M,
    GrebFeola,growth} (see also \cite{MaRo}) in which the strategy of proof is similar to
  ours, but the situation is much simpler, since the geography of the
resonances is trivial in all the models considered therein.

We finally recall that in some cases logarithmic bounds on the growth of
Sobolev norms have been obtained \cite{Bou2,wang08,fang}.  We think that this kind
of results can be obtained by developing the methods of the present
paper and adding quantitative estimates for the analytic or Gevrey case.\\

\noindent
{\it Acknowledgments.} We warmly thank Alberto Maspero who convinced
us to try to apply the methods of \cite{risonante} to the problem of
growth of Sobolev norms in an unbounded context: his encouragement was
what convinced us to tackle this problem and finally led to this paper. {We also thank Alberto Maspero and Massimiliano Berti for interesting discussions on the structure of the invariant blocks.}

This work was partially supported by GNFM.

	\section{Main result}
	Let ${\bf e}_1, {\bf e}_2,
	\ldots, {\bf e}_d$ be a basis of $\Gamma,$ namely 
	\begin{equation}\label{definizione Gamma}
	\Gamma := \Big\{  \sum_{i = 1}^d k_i {\bf e}_i : k_1, \ldots, k_d \in
	\Z\Big\}\ .
	\end{equation} 
	By introducing in $\R^d$ the basis of the vectors
	$\be_i$, one is reduced to an equation of the form
	\begin{align}
	\label{g}
	\im \partialt \psi = \check H (t)\psi =\left(-\Delta_{g}+ \check V(t)\right) \psi\ ,
	\\
	\label{deltag}
	-\Delta_{g}:=-\sum_{A, B = 1}^{d} g^{AB} \partial_A \partial_B\,,
	\end{align}
	with p.b.c. on the standard torus $\T^d:=\R^d/(2\pi \Z)^d$.
	Here $\{g^{A B}\}_{A, B = 1}^d$ is the inverse of the matrix with elements
	\begin{equation}
	\label{g.1}
	 g_{AB}:=\be_A\cdot\be_B\,,
	\end{equation}
	\red{and $\check V(t)$ is a family of pseudodifferential operators on $L^2(\T^d)$ of the same order as $V(t)$\footnote{\red{actually $\forall t \in \R$, if $v(t)$ is the symbol of $V(t)$ according to Definition \ref{def.op} below, then $\check{V}(t)$ has symbol $\check{v}(t)$, where $\check{v}(t)$ coincides with $v(t)$ written in the new variables.}} }
	From now on, we restrict our
	analysis to the equation \eqref{g} on $L^2(\T^d)$, and in the following we will omit the checks. Thus we consider a Schr\"odinger equation of the form
        \begin{align}
        \label{main.00}
	\im \partialt \psi&= H(t) \psi \ , \quad \psi \in L^2(\T^d)\,,
\\
  \label{main.eq}
H(t)&= -\Delta_{g} +
        V(t) \ .
                \end{align}
	In the following we will only deal with
	scalar products and norms with respect to the metric $g$; we will
	denote by
	\begin{equation}
	\label{scal}
	\scal \xi \eta:= \sum_{A, B = 1}^{d} g^{AB}\xi_A\eta_B\ ,\quad 	\| \xi\|^2:= \scal \xi\xi\ 
	\end{equation}
	the scalar product and the norm of covectors with respect
	to this metric.
	Finally, let $d\mu_g(x)$ be the volume form corresponding to
	$g$: for any $\psi \in L^2(\T^d),$ we define its Fourier
        coefficients by
	$$
	\hat{\psi}_k = \frac{1}{\red{\mu_g(\T^d)}} \int_{\T^d} \psi(x) e^{-\im k \cdot x}\ d \mu_g(x)\,, \quad \forall k \in \Z^d\,,
	$$
	where $k \cdot x = \sum_{A=1}^{d} k_A x^{A}$.
    
	We now define precisely the class of symbols and of pseudo-differential operators we will use:
	\begin{definition}\label{def.simb}
Let $0<\delta\leq 1$, and let $g$ be a flat Riemannian metric. Given $m \in \R,$ we say that $v \in \cinfty \left(\T^d \times \R^d ;\C \right)$ is a symbol of order $m,$ and we write $v \in {\simb^m,}$ if for any $N_1, N_2 \in \N$ there exists a positive constant $C_{N_1, N_2} = C_{N_1, N_2}(v)$ such that
	\begin{equation}\label{simb}
	\sup_{x \in \T^d} \left| \partial^{N_1}_x \partial^{N_2}_\csi
        v( x, \csi)\right| \leq C_{N_1, N_2} \langle \csi \rangle^{m -
          \delta N_2}\ , \quad \forall \csi \in \R^d\,,
	\end{equation}
	where $\langle \xi \rangle := \left(1 + \| \xi\|^2
        \right)^{\frac{1}{2}}$. 
	\end{definition}
	\begin{definition}\label{def.op}
For {$0<\delta\leq 1$,} $m \in \R$, given a symbol $v \in \simb^m,$ we define the corresponding Weyl operator by 
	\begin{equation}\label{weyl.for.president}
	V  \psi(x) = \sum_{\csi \in \Z^d} \sum_{k \in \Z^d} \hat{v}_k\left(\csi + \frac{k}{2}\right) \hat{\psi}_\csi e^{\im (\csi+ k)\cdot x} \quad \forall \psi \in L^2(\T^d)\,.
	\end{equation}
	A linear operator $V$ is said to be a pseudo-differential
        operator of order $m$ if there exists $v \in \simb^m$ such
        that \eqref{weyl.for.president} holds; in such a case we write
        $\displaystyle{V = \Op(v) \in \OPS^m.}$
	\end{definition}
	\begin{remark}
		For any $m \in \R$, the space $S^m_\delta$  endowed with the
                sequence of seminorms given by the constants
                $C_{N_1, N_2}$ of \eqref{simb}, is a
                Fr\'{e}chet space. The same is true for the space
                $\OPS^m$ provided one defines the seminorms of an
                operator to be the
                seminorms of its symbol.
	\end{remark}

	\begin{remark}\label{lem.real.sad}
	It is well known that a pseudo-differential operator $V = \Op(v)$ is self-adjoint on $L^2(\T^d)$ if and only if the corresponding symbol $a$ is real valued.
	\end{remark}
	\begin{definition}\label{def.smooth}
          { If $\cF$ is a Fr\'echet (or a Banach) space, we denote
            by  $\timereg\left(\R; \cF\right)$ the space of the functions
            $V\in C^{\infty}\left(\R; \cF\right)$, $V=V(t)$ such that all the
            seminorms of $\partial^k_tV(t)$ are bounded uniformly in $t$.}
	\end{definition}
	For any $\sigma \geq 0$, we define the Sobolev space $H^\sigma = H^\sigma(\T^d)$ {as the closure} of $\cinfty(\T^d)$ with respect to the norm
	\begin{equation}\label{scherzavo}
	\| \psi\|_{\sigma}^2 = \sum_{\csi \in \Z^d} \langle \csi \rangle^{2 \sigma} |\hat{\psi}_\csi|^2\,.
	\end{equation}
	Furthermore, for any $\sigma_1, \sigma_2 \geq 0$, we define $\cB(H^{\sigma_1}; H^{\sigma_2})$ as the space of bounded linear operators from $H^{\sigma_1}$ to $H^{\sigma_2}$, and for any $A \in \cB(H^{\sigma_1}; H^{\sigma_2})$ we define the standard operator norm as 
	$$
	\| A\|_{\sigma_1, \sigma_2} = \sup_{u \in H^{\sigma_1}\,,\ \| u\|^{}_{\sigma_1} = 1} \| A u\|^{}_{\sigma_2}\,.
	$$
	If $\sigma_1 = \sigma_2,$ we simply write $\cB(H^{\sigma_1})$ instead of $\cB(H^{\sigma_1}; H^{\sigma_2}).$
\red{
	For future reference, we recall the Calderon-Vaillancourt Theorem:
	\begin{theorem}[Calderon-Vaillancourt]\label{teo.cal-va}
	  For any $m \in \R$ one has that
          if $A\in \OPS^{m}$ then $A\in \cB(H^\sigma;H^{\sigma-m})$ for all $\sigma \in \R$. Moreover, for
          time dependent operators, one has that $A(.)\in \calc^\infty_b(\R;\OPS^{m})$ implies $A(.)\in \calc^\infty_b(\R;\cB(H^\sigma;H^{\sigma-m}))$.
\end{theorem}
	}
	Given a time dependent family of self-adjoint operators $A(t)$,
	consider the initial value problem
	\begin{equation}\label{i.v.p}
	\im \partialt \psi(t) = A(t) \psi(t)\,, \quad \psi(s) = \psi\,.
	\end{equation}
	When the solution $\psi(t)$ exists globally in time, for any
        $t, s\in \R$ we denote by $\cU_{A}(t, s)$ the evolution
        operator mapping $\psi \in L^2(\T^d)$ to $\psi(t)\,.$

	We are now in position to state the main result of our paper.
	\begin{theorem}\label{crescono}
	Let $H$ be as in \eqref{main.eq} with $V \in \timereg\left(\R;
        OPS_1^{m}\right)$ \red{a family of self-adjoint pseudo-differential operators} \red{of order} $m<2$. {Then for any $\sigma\geq 0$ and
          for any initial datum $\psi \in H^\sigma$ there exists a
          unique global solution $\psi(t) := \cU(t, s)\psi \in
          H^\sigma$ of the initial value problem
          \begin{equation}
            \label{p.abs}
	\im \partialt \psi(t) = H(t) \psi(t)\,, \quad \psi(s) = \psi\,.
          \end{equation}
        }
	Furthermore, for any $\sigma>0$ and $\varep>0$ there exists a
        positive constant $K_{\sigma, \varep}$ such that for any $\psi
        \in H^\sigma$ one has
	\begin{equation}\label{growth.eq}
	\|\cU_H(t, s) \psi\|_\sigma \leq K_{\sigma, \varep} \langle t -s \rangle^{\varep}\| \psi\|_\sigma \quad \forall t, s \in \R\,.
	\end{equation}
	\end{theorem}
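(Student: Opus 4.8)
The plan is to follow the three--step strategy outlined in the introduction, adapting the time--independent resonant normal form theory of \cite{nonris,risonante} to the present time--dependent setting.

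\textbf{Step 1: time--dependent resonant normal form.} First I would prove a normal form theorem stating that the propagator $\cU_H(t,s)$ of \eqref{main.eq} can be written as $\cU_H(t,s) = W(t)^{-1}\,\cU_{H_\infty}(t,s)\,W(s)$, where $W(t) = e^{\im G(t)}$ is a one--parameter family of unitary operators generated by a self--adjoint $G \in \timereg(\R;\OPS^{m-\nu})$ for some $\nu>0$ (so that $W(t)\in\cB(H^\sigma)$ uniformly, by Calderon--Vaillancourt and Lemma \ref{egorov}), and $H_\infty(t) = -\Delta_g + Z(t) + \cR(t)$ with $Z(t)$ a ``resonant'' normal form part commuting (block--diagonally) with $-\Delta_g$ on a suitable lattice decomposition, and $\cR(t) \in \timereg(\R;\OPS^{-\rho})$ a smoothing remainder of arbitrarily negative order $\rho$. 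The construction is the semiclassical averaging method adapted to the geometry of the resonances: one covers phase space (frequency space) by the resonant regions $\{Z_{M^*}\}$ with suitable cutoff functions $\chi_{M^*}$, and on each region removes the non--resonant part of $V(t)$ by solving a homological equation $[-\Delta_g, G] + V - Z = \cR$ region by region, exactly as in \cite{risonante}; the only new feature is carrying the time variable along, which is harmless since $\partial_t$ commutes with all the pseudodifferential manipulations and the $\timereg$ norms are designed to absorb $t$--derivatives. Here I would also extract from the proof of \cite{risonante} the extra structural information referred to in the introduction, namely the explicit block--diagonal structure of $Z(t)$ with respect to an orthogonal splitting of $\Z^d$ determined by the resonant module in each region.

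\textbf{Step 2: boundedness of Sobolev norms for the normal form dynamics.} Next I would show that the propagator $\cU_{H_1}(t,s)$ of the truncated normal form equation $\im\partial_t\psi = (-\Delta_g + Z(t))\psi$ satisfies $\|\cU_{H_1}(t,s)\psi\|_\sigma \le C_\sigma\|\psi\|_\sigma$ for all $t,s$, with no growth. This is where the block--diagonal structure is used: $-\Delta_g + Z(t)$ preserves a decomposition of $L^2(\T^d)$ into finite--dimensional blocks (labelled by the resonant geometry), each invariant under the flow; on each block, since $Z \in \OPS^m$ with $m<2$ and $-\Delta_g$ is the leading term, the quantity $\||D|^\sigma \psi\|$ is controlled because within a block the eigenvalue clusters of $-\Delta_g$ have bounded diameter relative to $\langle\csi\rangle$, so $\|\psi(t)\|_\sigma$ is equivalent, uniformly in $t$, to $\|(-\Delta_g+Z(t))^{\sigma/2}\psi(t)\|$--type conserved quantities — this is exactly the mechanism of \cite{growth}, here made to work cluster by cluster. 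I expect this to be the technical heart once the clusters are in hand.

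\textbf{Step 3: Duhamel and interpolation.} Finally I would treat the full equation as a smoothing perturbation of the normal form one. Writing $H_\infty = H_1 + \cR$ with $\cR\in\timereg(\R;\OPS^{-\rho})$, Duhamel's formula gives $\cU_{H_\infty}(t,s) = \cU_{H_1}(t,s) - \im\int_s^t \cU_{H_1}(t,\tau)\,\cR(\tau)\,\cU_{H_\infty}(\tau,s)\,d\tau$. A crude a priori bound (energy estimate using $m<2$, or bootstrap) gives some polynomial bound $\|\cU_{H_\infty}(\tau,s)\psi\|_{\sigma'} \lesssim \langle\tau-s\rangle^{a(\sigma')}\|\psi\|_{\sigma'}$ at every regularity $\sigma'$; feeding this into the Duhamel term together with the $\rho$--smoothing of $\cR$ and the no--growth bound of Step 2 on $\cU_{H_1}$, one gains: the growth exponent at level $\sigma$ improves from $a(\sigma)$ to roughly $a(\sigma-\rho')$ for some fixed gain $\rho'>0$ depending on $\rho$ and $m$. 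Iterating this improvement finitely many times drives the exponent below any prescribed $\varep>0$; a standard interpolation argument (as in \cite{growth}) handles the loss of derivatives incurred along the iteration. Undoing the conjugation $W(t)^{\pm1}$, which is bounded on every $H^\sigma$ uniformly in $t$, yields \eqref{growth.eq}. Global existence and uniqueness in $H^\sigma$ follow from the same a priori bounds plus the self--adjointness of $H(t)$.

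\textbf{Main obstacle.} The main difficulty is Step 1 in the resonant (nontrivial geography) case: unlike the global normal form available for isochronous or one--dimensional systems in \cite{growth,Mon17}, here one must glue together the region--by--region normal forms with cutoffs that depend on $\langle\csi\rangle$, keeping track of the commutators between cutoffs of overlapping resonant regions and ensuring the homological equations are solvable with controlled loss — i.e., faithfully quantizing both the analytic and geometric parts of Nekhoroshev's construction \cite{BL21} — all while threading the time parameter and the $\timereg$ estimates through. Once the block--diagonal normal form with its quantitative cluster estimates is established, Steps 2 and 3 are comparatively routine adaptations of \cite{growth}.
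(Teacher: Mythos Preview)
Your three--step scaffolding matches the paper's, and Step 1 is essentially Theorem \ref{norm.form}. But Steps 2 and 3 as you describe them are off in ways worth correcting.

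\textbf{Step 2.} The quantity $\|(-\Delta_g+Z(t))^{\sigma/2}\psi(t)\|$ is \emph{not} conserved once $Z$ depends on time, so this is not the mechanism. The paper's argument is more elementary and is the ``extra information'' extracted from \cite{risonante}: on each block $\cW_{M,i}$ one has the uniform norm equivalence $\ell_{M,i}^\sigma\|\psi\|_0 \le \|\psi\|_\sigma \le K_\sigma\,\ell_{M,i}^\sigma\|\psi\|_0$ (Lemma \ref{lemma.senza.tempo}), which follows from the geometric fact that $\|\csi_M\|\le C\,\ell_{M,i}$ for all $\csi\in W_{M,i}$ (Lemma \ref{ti.piace.vincere.facile}). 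Since the restricted flow is unitary in $L^2$, this equivalence converts $L^2$--conservation into an $H^\sigma$--bound on each block with a constant independent of the block, and then the orthogonal decomposition reassembles the global bound (Lemmas \ref{lemma.flusso.blocchi}, \ref{flusso normal form}). No conserved energies of the time--dependent operator are needed.

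\textbf{Step 3.} Your bootstrap/iteration is more work than necessary and obscures the key point. The paper does one Duhamel step (Proposition \ref{lemma.aggiungi.reg}): writing $\cU_{H_\infty}(t,s)\psi=\cU_{H_1}(t,s)\psi-\im\int_s^t\cU_{H_1}(t,s')\cR(s')\cU_{H_\infty}(s',s)\psi\,ds'$ and estimating in $H^N$, the smoothing $\cR\in\OPS^{-N}$ lets you bound $\|\cR(s')\cU_{H_\infty}(s',s)\psi\|_N$ by $C\|\cU_{H_\infty}(s',s)\psi\|_0=C\|\psi\|_0$, using only $L^2$--unitarity of the full flow. This gives directly $\|\cU_{H_\infty}(t,s)\psi\|_N\le K'\langle t-s\rangle\|\psi\|_N$ --- no a priori polynomial bound, no iteration. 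Then interpolation between $\sigma=0$ (no growth) and $\sigma=N$ (linear growth) with $N$ chosen large (Proposition \ref{ovvia.ma.la.scrivo}) yields $\langle t-s\rangle^{\sigma/N}<\langle t-s\rangle^\varep$. Undoing the conjugation by $U_N(t)$ finishes.
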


{The main example we have in mind is the Hamiltonian of a particle in a
time dependent electromagnetic field \red{on $\T^d$}, namely
\begin{equation}
  \label{em}
H(t)=\left(-\im\nabla +A(x,t)\right)^2+\Phi(x,t)\ ,
  \end{equation}}
   \red{with $A,\ \Phi \in \cC^\infty_b(\R; \cC^\infty(\T^d))$.}\\     
	The rest of the paper is devoted to the proof of Theorem \ref{crescono}.
	\section{Normal form}
	\subsection{Statement of the normal form result}
First we fix once for all real parameters $\delta$, $\ep,$ $\tau >0$
fulfilling
	\begin{equation}\label{legami.tra.parametri}
	0<\ep (\tau +1) < \delta< 1\,, \quad \tau \geq d-1\,, \quad \delta + d(d + \tau + 1)\ep < 1\,, \quad m<2\delta\, .
	\end{equation}
        We also put
        $$
\delta_*:=\delta + d(d + \tau + 1)\ep\ .
$$
\begin{definition}\label{def.nor}[Normal form operator]
	$\displaystyle{\NForm = \Op(\nform)\in \timereg\left(\R; \OPS^{m}\right)}$ is said to be \emph{in normal form} (with parameters $\delta, \ep, \tau$) if its symbol
	$$
	\begin{gathered}
	\nform(t, x, \csi) = \sum_{k \in \Z^d} \hat{\nform}_k(t, \csi) e^{\im k \cdot x}\,,
	\end{gathered}
	$$
	satisfies
	$$
	\hat{\nform}_k(t, \csi) \neq 0 \Longrightarrow |\scal{\csi
        }{k}| \leq \langle \csi  \rangle^{\delta}\|k\|^{-\tau}
        \textrm{ and } \|k\| \leq \langle \csi \rangle^{\ep}\, .
	$$
	\end{definition}
	{\begin{definition}\label{coniuge}
		Given two families of self-adjoint operators $H(t)$
                and $H^+(t)$ and a family of operators $U(t)$,
                unitary in $L^2$,
                we say that $U(t)$ conjugates $H(t)$ to $H^+(t)$ if the following holds: $\psi(t)$ solves the equation
		$$
		\im \partialt \psi(t) = H(t) \psi(t) \quad \forall t\in \R
		$$
		if and only if $\phi(t):= U(t) \psi(t)$ satisfies
		$$
		\im \partialt \phi(t) = H^+(t) \phi(t) \quad \forall t \in \R\,.
		$$
	\end{definition}
	With the above definitions, we can state our normal form result:
	}
	\begin{theorem}\label{norm.form}
	Let $H$ be as in the equation \eqref{main.eq}, with $V \in
        \timereg\left(\R;\OPS^m\right)$, {$m<2\delta<2$.} Assume
        that $V(t)$ is a family of self-adjoint operators and let { 
          $\rho : = \min\{2\delta - m, \delta\}$}. Then for any $N \in \N$
        there exists a time dependent family $U_N(t)$ of unitary (in
        $L^2$) maps {which conjugate the Hamiltonian
          \eqref{main.eq}\red{, in the sense of Definition \ref{coniuge},} to
	\begin{equation}\label{eq.in.forma}
	\widetilde{H}^{(N)}(t) + R^{(N)}(t)\,,
	\end{equation}
	}	
	where
        \begin{equation}
          \label{Nf.1}
\widetilde{H}^{(N)}(t) = -\Delta_{g} +
        \NForm^{(N)}(t)
        \end{equation}
and the following properties hold:
	\begin{enumerate}
	\item $\NForm^{(N)} \in \timereg\left(\R; \OPS^m\right)$ is in normal form, and $\NForm^{(N)}(t)$ is a family of self-adjoint operators 
	\item $R^{(N)} \in \timereg\left(\R;\OPS^{{m-\rho N}}\right)$, and the family $R^{(N)}(t)$ is self-adjoint
	\item For any $\sigma \geq 0,$ $U_N,\ U_N^{-1} \in L^\infty\left( \R; \cB(H^\sigma)\right)\,.$
	\end{enumerate}
	\end{theorem}

\subsection{Scheme of the proof of Theorem \ref{norm.form}}\label{a.parole}
Theorem \ref{norm.form} is a time dependent variant of Theorem 5.1 of
\cite{nonris} (see also Theorem 2.18 of \cite{risonante} and Theorem
4.3 of \cite{PS10}). More precisely, Theorem 5.1 of \cite{nonris}
provides the conjugation of an operator $H = -\Delta_{g} + V$ with a
\emph{time independent} $V$ of \red{order} smaller than $2$, to an operator
$\widetilde{H} + R$, where $\widetilde{H}$ is in normal form and $R$
is regularizing. {Here we adapt the construction performed therein to
  the time dependent case. The key point is that}, as summarized
below, time enters only as a parameter in the construction.
  {To see this, we recall that the construction of \cite{risonante} is
  iterative and we focus on the first step. We look for
a transformation of the form $ e^{\im G(t)},$} where $G \in
\timereg\left(\R; \OPS^{\eta}\right)$, {$\eta<\delta$} is
a family of self-adjoint operators. {Then, by Lemmas
  \ref{quantum.lie}, \ref{egorov}}, $e^{\im G(t)}$ conjugates the Hamiltonian
\eqref{main.eq} to $H^+(t)$ given by
		\begin{equation}\label{h.pl.a.pezzi}
		\begin{aligned}
		H^+(t) &= -\Delta_{g}  \\
		& - \im
		[-\Delta_{g}; G(t)] + V(t) \\
		&- \int_{0}^{1} e^{\im \tau G(t) }\partialt\left({G(t)}\right) e^{-\im \tau G(t)}\ d\tau  + R(t) \quad \forall t\in \R\,,
		\end{aligned}
		\end{equation}
		and $R \in \timereg\left(\R; \OPS^{m
                  +\eta-\delta}\right)$. If $\eta<
                  \min \{m, \delta\}$,  both the operators in the
                third line of \eqref{h.pl.a.pezzi} are remainder terms, in the sense that they are
                of order lower than $V(t)$. This is the reason why
                  time only plays the role of a parameter in the
                  normal form construction and the rest of the proof
                  is almost exactly as in \cite{nonris}. In order to be
                  self-contained, we give the details of the proof in Appendix \ref{sec.details}. 

	\section{Analysis of the normal form operator}\label{sez.struct.thm}
	{The starting point for the analysis of the dynamics of the
          normal form \eqref{Nf.1} is the structure theorem proved in
          \cite{risonante}. 
          To recall it we start by giving a few definitions.}
	
	\begin{definition}\label{def.cal.e}
		Let $E \subseteq \Z^d$. We call
		$$
		\cE := \overline{\Span \left\lbrace e^{\im \csi \cdot x}\ \left|\ \xi \in E \right.\right\rbrace} \subseteq L^2(\T^d)\,
		$$
		\emph{the subspace generated by $E$}.
	\end{definition}
	\begin{definition}
	Given a subspace $\cal E$ of $L^2(\T^d)$, we define $P_{\cal
          E}$ as the orthogonal projection on $\cal E$.
	\end{definition}
	\begin{definition}
		\label{mod}
		A subgroup $M$ of $\Z^d$ is called a module if
		$\Z^d\cap\Span_{\R}M=M$. Here and below, $\Span_{\R}M$ is the subspace generated
		by taking linear combinations with real coefficients of elements of
		$M$. {If $M \neq \{0\}$ and $M \neq \Z^d,$ we say that $M$ is a non trivial module.}
	\end{definition}
	Given a covector $\csi \in \Z^d$ and a module $M$, we denote
	$$
	\csi=\csi_M+ \csiort\ ,\quad \csi_M\in\Span_{\R}M\ ,\quad \csiort\in
	(\Span_{\R}M)^{\perp} \ ,.
	$$
%
	{The following result was proved in \cite{risonante}:}

\begin{theorem}\label{struct.resonant}[Corollary 5.11 of \cite{risonante}]
Let $\delta, \ep, \tau,\delta_*$ be parameters fulfilling 
\eqref{legami.tra.parametri}. For
        any modulus $M\subset\Z^d$ there exists a countable set ${\cal
          I}_M$ such that there exists a partition of $\Z^d$
$$ \Z^d = \bigcup_{M } \bigcup_{i \in {\cal I}_M} W_{M, i}\,,
$$ {where $M$ varies in the set of all the modules of $\Z^d$. Such a partition
          has the property  that any operator in normal form
          leaves invariant all the subspaces $\cW_{M,i}$ generated by
          $W_{M,i}$.  Furthermore, each set $W_{M, i}$ has
          finite cardinality, and the following properties hold:
\begin{enumerate}
\item $\forall i \in \cI_{\{0\}}$ the set $W_{\{0\}, i}$ has cardinality 1, namely $\sharp W_{\{0\}, i} = 1$.
\item If $M = \Z^d$, $\mort = \{0\}$ and $W_{\Z^d,\{0\}}$ has
  cardinality bounded by an integer $n_*$ which depends on the metric
  $g$ and on $d,\ \delta,\ \ep, {\tau}$ only
				\item
				If $M$ is a non trivial module, for any $i \in \mort$ one has: $\csi_{\mper} = (\csi')_{\mper}$  $\forall \csi, \csi' \in W_{M, i}$, and
				\begin{equation}\label{disuguaglianza.potente}
				\| \csi_M\| \leq K \langle \csi \rangle^{\delta_*} \quad \forall \csi \in W_{M, i}\,,
				\end{equation}
				where $K$ is a positive constant depending only on $g$ and on $d,\ \delta,\ \ep,
				{\tau}.$
			\end{enumerate}}
	\end{theorem}

\begin{remark}
	The sets $\{\cW_{M, i}\}_{M, i}$ are mutually orthogonal, both
        with respect to the $L^2$ and with respect to the
        $H^\sigma$ scalar products.
\end{remark}
We now prove {the following result. 
	\begin{lemma}\label{ti.piace.vincere.facile}
	There exists a positive constant $C$, depending only on $d,$ $\delta, \ep, \tau$ and $g,$ such that for any module $M$ and any $i \in \mort$, either
	\begin{equation}\label{piccolo}
	\max_{\csi \in W_{M, i}} \| \csi\| \leq C ,
	\end{equation}
	or  
	\begin{equation}\label{disuguaglianza.piu.potente}
	\max_{\csi \in W_{M, i}}\| \csi\| \leq 2 \min_{\csi \in W_{M, i}}\| \csi\| \,.
	\end{equation}
	\end{lemma}
	} {\begin{proof} If $M=\{0\}$, Item $1.$ of Theorem
    \ref{struct.resonant} implies that $W_{M, i}$ satisfies
    \eqref{disuguaglianza.piu.potente} $\forall i$, since it only
    contains a single point, whereas if $M = \Z^d$, Item $2.$ of
    Theorem \ref{struct.resonant} ensures that \eqref{piccolo}
    holds. Let $M$ be a non trivial module and let $i \in
    \mort$. First, using that $\langle \csi \rangle^2 = \|\csi_M\|^2 +
    \langle \csiort \rangle^2$, one has that
\begin{equation}\label{basso}
\min_{\csi \in W_{M, i}}\| \csi\| \geq \left\|\xi_{M^\bot}\right\| \ ,
\end{equation}
and by the equation \eqref{disuguaglianza.potente}, one gets
	\begin{equation}\label{bound.1}
		\|\csi_M\| < K^\prime \langle \csiort \rangle^{\delta_\ast}
	\end{equation}
for some $K' > 0$. Therefore one has 
$$
\left\|\xi_M\right\|\leq \sqrt{3}\left\|\xi_{M^\bot}\right\|
$$
provided {$\left\|\csiort\right\|>R$} for some constant $R > 0$ which depends on $K'$.
The latter inequality implies that
\begin{equation}\label{alto}
\left\|\xi\right\|^2=\left\|\xi_M\right\|^2+\left\|\xi_{M^\bot}\right\|^2\leq
4 \left\|\xi_{M^\bot}\right\|^2
\end{equation}
{and the estimate \eqref{disuguaglianza.piu.potente} follows.}
Note that $\left\|\xi_{M^\bot}\right\|\leq R$ implies
$\left\|\xi\right\|\leq C$ for some constant $C > 0$. The claimed statement has then been proved. 
	\end{proof}
	\section{Time evolution of Sobolev norms}

	Given $N \in \N,$ we start by providing upper bounds on the Sobolev norms of the solutions of the system
	\begin{equation}\label{normal form equation}
	\im \partialt \psi(t) = \widetilde{H}^{(N)} (t) \psi(t)\,,
	\end{equation}
	with $\widetilde{H}^{(N)}$ in normal form. 
        
	\begin{lemma}\label{lemma.flusso.blocchi}
	For any $\sigma \geq 0$ there exists a positive constant $K_\sigma$ such that, $\forall \psi \in H^\sigma$
	\begin{equation}\label{bounded.flow}
	\| \cU_{\widetilde H^{(N)}}(t, s)\psi\|_{\sigma} \leq K_\sigma \| \psi\|_{\sigma} \quad \forall t, s \in \R\,.
	\end{equation}
	\end{lemma}
	\begin{proof}
	{ First we prove the result in each
            block $\cW_{M,i}$. The result is obvious for blocks with uniformly
            bounded dimension. For the others we exploit the fact that
            they are dyadic namely they fulfill \eqref{disuguaglianza.piu.potente}. Precisely, we first remark that, by
            selfadjointness of $\widetilde H_{M,i}^{(N)}:=\widetilde
            H^{(N)}\big|_{\cW_{M,i}} $, the $L^2$
            norm is conserved under the flow $\cU_{\widetilde H^{(N)}_{M,i}}$. Then, denoting for
            simplicity $ \psi_{M,i}(t):=  \cU_{\widetilde H^{(N)}_{M,i}}(t, s)P_{\cW_{M,i}}\psi$ \red{and letting $c>0$ be such that $\displaystyle{\langle \csi \rangle \leq c \| \csi\|}$ $\forall \csi \neq 0,$} in each block one has  
            \begin{equation}\label{stima}
            \begin{aligned}
\left\|\psi_{M,i}(t) \right\|_\sigma & \red{\leq \left( \red{c} \max_{ \xi \in W_{M,i}}\left\|\xi\right\|\right)^\sigma\left\|\psi_{M,i}(t)
	\right\|_0}\\
	&\leq 
\left( \red{2 c} \min_{ \xi \in W_{M,i}}\left\|\xi\right\|\right)^\sigma\left\|\psi_{M,i}(t)
\right\|_0
\\
&= \left(\red{2 c} \min_{ \xi \in W_{M,i}}\left\|\xi\right\|\right)^\sigma\left\|\psi_{M,i}(0)
\right\|_0  
  \leq \red{(2 c)^{\sigma}} \left\|\psi_{M,i}(0)
\right\|_\sigma \ .
\end{aligned}
          \end{equation}
Then one has
		\begin{equation}\label{u.a.pezzi}
		\begin{aligned}
		\| \cU_{\widetilde{H}^{(N)}}(t, s) \psi \|^2_\sigma & = \| \cU_{\widetilde{H}^{(N)} }(t,s) \sum_{M, \red{i}} P_{\cW_{M, i}} \psi \|^2_\sigma \\
		&= \sum_{M, i} \| \psi_{M,i}(t) \|^2_\sigma\,.
		\end{aligned}
                \end{equation}
Using \eqref{stima} the result immediately follows. 
}		\end{proof}

\begin{proposition} \label{lemma.aggiungi.reg}
  Let ${N'} \in \N$ and \blu{let $H_0(t)$, $R(t)$
be time dependent families of self-adjoint operators with the
following properties
			\begin{enumerate}
				\item {There exists a positive constant $K_{N'}$} such that
				$$
				\left\|{\cU_{H_0}(t, s) \phi}\right\|_{{N'}} \leq K_{{N'}} \|\phi\|_{{N'}} \quad \forall t, s \in \R\,,\ \phi \in {H^{N'}}\,,
				$$
				\item $R = \Op(r) \in \timereg\left(\R; \OPS^{-{N'}}\right)$
			\end{enumerate}
                        Consider
                        $$
H(t)=H_0(t)+R(t)\ ;
                        $$} then $\forall\psi \in H^{N'}$ there exists
  a unique global solution $\psi(t) = \cU_H(t, s) \psi \in H^{N'}$ of
  the initial value problem \eqref{p.abs}, and the map $\psi : t
  \mapsto \psi(t)$ is in $\calc\left(\R; H^{N'}\right) \cap
  \calc^1\left(\R; H^{N' - 2}\right)\,.$ Furthermore there exists $K'>
  0$, depending only on $N'$, on $K_{{N'}}$ and on the family of
  seminorms of $R,$ such that for any $\psi \in {H^{N'}}$ one has
			\begin{equation}\label{flow.hn}
			\left\| \cU_H(t, s) \psi \right\|_{N'} \leq K^\prime  \langle t -s \rangle   \| \psi\|_{N'} \quad \forall t, s \in \R\,.
			\end{equation}
		\end{proposition}
                \begin{proof}\red{
We start by proving global well posedness. As usual we find solutions of the pseudo-PDE, on a time interval $[s - T, s + T]$
	$$
	\im \partial_t \psi =  ({H}_0 + R) \psi, \quad \psi(s) = \psi_0
	$$
	arguing by a standard fixed point argument on the map 
	$$
	\Phi(\psi) := {\cal U}_{H_0}(\blu{t,s}) \psi_0 + \int_{s}^{t} \cU_{H_0}(t, s^\prime) (-\im R(s^\prime))  \psi(s')\ d s^\prime\
	$$
	in a suitable ball ${B_{\blu{N'}}(\rho) := \{ \psi \in {\cal C}([s - T, s + T], H^{\blu{N'}}) : \| \psi \|_{L^\infty(\R, H^{\blu{N'}})} \leq \rho \}}$, where $\rho = \rho(\| \psi_0 \|_{\blu{N'}})$ and $T = T(\rho)$ have to be chosen appropriately. 
	Moreover, a Gronwall inequality implies that 
	$$
	\| \psi(t) \|_{\blu{N'}} \leq C_{\blu{N'}} {\rm exp}\Big( T \| R \|_{L^\infty(\R, {\cal B}(H^{\blu{N'}}))}\Big) \| \psi_0 \|_{\blu{N'}}\,,
	$$
	for a suitable positive constant $C_{\blu{N'}}.$ Hence $\psi(t)$ does never blow up in ${[s - T, s + T]}$ and hence it can be extended outside this interval. This implies that it is globally defined.}

We proceed proving estimate \eqref{flow.hn}. 
    Let
$\psi \in {H^{N'}}.$ By Duhamel formula
			\begin{equation}
			\cU_H(t, s) \psi = \cU_{H_0}(t, s) \psi + \int_{s}^{t} \cU_{H_0}(t, s^\prime) (-\im R(s^\prime)) \cU_{H}(s^\prime, s) \psi\ d s^\prime\,.
			\end{equation}
By \red{the Calderon Vaillencourt Theorem \ref{teo.cal-va}, one has} $R\in
                        \timereg\left(\R; \cB(H^{0};H^{N'})\right)$. So
                        since $H(t)$ is self-adjoint in $L^2$, 
one gets 
			\begin{align*}
			&\|\cU_H(t,s) \psi\|_{N'} \leq K_{N'} \| \psi \|_{N'} + \int_{s}^{t} K_{N'} \sobop{R(s^\prime)}{N'}{0} \| \cU_H(s^\prime, s)\psi\|_{0}\ d s^\prime\\
			&=  K_{N'} \| \psi \|_{N'} + \int_{s}^{t} K_{N'} \sobop{R(s^\prime)}{N'}{0} \| \psi\|_{0}\ d s^\prime\leq K_{N'} \| \psi \|_{N'} +  K_{N'} C \langle t -s\rangle \| \psi \|_0 \,,
                        \end{align*}
for some constant $C$. 
                \end{proof}
                
\begin{proof}[Proof of Theorem \ref{crescono}] \blu{Since $N$ is
    arbitrary in Theorem \ref{norm.form}, equation \eqref{flow.hn}
    holds for the normalized operator with $N' = \lfloor m - N \rho
    \rfloor$, so
    one gets
    \begin{equation}
      \label{flussi.nor}
	\| \cU_{\widetilde{H}^{(N)} + R^{(N)}}(t,s) \psi \|_{N^\prime}
        \leq K^\prime \langle t-s \rangle \|\psi \|_{N^\prime}
        \quad \forall t, s \in \R,\ \forall N\in\N\ .
    \end{equation}
By Theorem \ref{norm.form}, $H$ is unitarily equivalent to
$\widetilde{H}^{(N)} + R^{(N)}$ via a map $U_N$ such
that $\displaystyle{U_N, U_N^*\in L^\infty\left(\R;
  \cB(H^{N'})\right)}$, so we get
    \begin{equation}
      \label{flussi.nor.1}
	\| \cU_{H}(t,s) \psi \|_{N'}
        \leq K^\prime \langle t-s \rangle \|\psi \|_{N'}
        \quad \forall t, s \in \R,\ \forall N'\in\N\ .
    \end{equation}
We proceed now by interpolation: \red{given}
$\sigma>0$, take $N'>\sigma$ and $\red{\varepsilon}:=\sigma/N'\in(0,1)$, then one has
     	for any $t$ and $s \in \R$
     	$$
     	\begin{aligned}
     	\| \cU_H(t, s)\|_{\sigma, \sigma} &= \| \cU_H(t, s) u\|_{\varepsilon N', \varepsilon N'}\\
     	&\leq \left(\| \cU_H(t, s)\|_{N', N'}\right)^{\varepsilon} \left(\| \cU_H(t,s)\|_{0, 0}\right)^{1 - \red{\varepsilon}}\\
     	&\leq (K^\prime)^{\varepsilon} \langle t -s \rangle^{\varepsilon }\, .
     	\end{aligned}
     	$$
       Taking $N'$ large enough one can make $\varepsilon$ arbitrarily small, 
     	which gives the thesis.}
	\end{proof}

	\appendix
	
	{\section{Lie transform}}\label{lie.section}
{For completeness, in this appendix we recall the lemmas of
  \cite{growth} which are needed in the proof of Theorem \ref{norm.form}}
	\begin{lemma}\label{quantum.lie} [Lemma 3.1 of \cite{growth}]
		Let $H(t)$ and $G(t)$ be two smooth families of
                self-adjoint operators. \red{Recall Definition \ref{coniuge} of conjugated operators; then} one has that
                $e^{\im G(t)}$ conjugates
                $H$ to
		\begin{equation}
		H^+(t) = e^{\im G(t)} H(t) e^{-\im G(t)} - \int_{0}^{1} e^{\im \tau G(t)} \partialt(G(t)) e^{-\im \tau G(t)}\ d \tau \quad \forall t \in \R\,.
		\end{equation}
	\end{lemma}
 The following lemma ensures that
	$e^{\im G(t)} H(t) e^{-\im G(t)}$ is still a family of time dependent
	pseudo-differential operators provided $G(t)$ and $H(t)$
        are. It also gives an expansion in
	pseudo-differential operators of decreasing order:
	
	\begin{lemma}[\red{Lemmas 3.2 and 3.3} of \cite{growth}]\label{egorov}
		Let $G \in \timereg\left(\R; \OPS^\eta\right)$ with
                $\eta < \delta\,$ and suppose that $G(t)$ is a
                self-adjoint family of operators.
Given $A \in \timereg\left(\R; \OPS^m\right)$ for some $m \in \R$, \red{for any $\tau \in [-1, 1]$ and for any $t\in \R$ $e^{\im \tau G(t)}$ is a unitary operator, and
	\begin{equation}\label{exp.l.inf}
	e^{\im \tau G} \in L^\infty\left(\R; \cB(H^\sigma)\right) \quad \forall \sigma \geq 0\,.
	\end{equation}
	Furthermore,} for any $N \in \N$ one has $\displaystyle{e^{\im \tau G} A e^{-\im \tau G}\in \timereg\left(\R; \OPS^{m}\right),}$ with
		\begin{equation}
		e^{\im \tau G} A e^{-\im \tau G} = \red{\sum_{j = 0}^{N-1}} \frac{(\im \tau)^{j}\textrm{Ad}^{\ j}_{G}A}{j!} + \timereg\left(\R; \OPS^{m + N(\eta -\delta)}\right)\,,
		\end{equation}
		where $\{\textrm{Ad}^{\ j}_{G}A \}_{j \in \N}$ is defined by
		$$
		\textrm{Ad}^{\ 0}_{G}A = A\,, \quad \textrm{Ad}^{\ j+1}_{G}A =  [G, \textrm{Ad}^{\ j}_{G}A] \quad \forall j \geq 1\,.
		$$
	\end{lemma}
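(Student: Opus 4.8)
The plan is to prove the three assertions of the lemma in turn. Unitarity is immediate: by hypothesis $G(t)$ is self-adjoint for every $t$ (and of order $\eta<\delta<2$), so by Stone's theorem $\tau\mapsto e^{\im\tau G(t)}$ is, for each fixed $t$, a strongly continuous one-parameter unitary group on $L^2(\T^d)$. For the bound \eqref{exp.l.inf} I would run a standard energy estimate. Fix $\sigma\ge 0$, a smooth $w$ and $t$, and set $v(\tau):=e^{\im\tau G(t)}w$, $f(\tau):=\|v(\tau)\|_\sigma^2=\langle \Dm{2\sigma}v(\tau),v(\tau)\rangle_{L^2}$; regularising with a Friedrichs mollifier (so that $\Dm{2\sigma}$ is replaced by bounded operators before differentiating) one gets $f'(\tau)=\langle \im[\Dm{2\sigma},G(t)]v(\tau),v(\tau)\rangle_{L^2}$, only the commutator surviving. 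By the Weyl calculus of the class $\simb^m$ — in which commuting with an operator of order $\eta$ lowers the order by $\delta-\eta>0$ — the operator $\Dm{-\sigma}[\Dm{2\sigma},G(t)]\Dm{-\sigma}$ lies in $\OPS^{\eta-\delta}\subset\cB(L^2)$, with $L^2$-norm controlled by finitely many seminorms of $G$ and hence uniformly in $t$; so $|f'(\tau)|\le C f(\tau)$ with $C$ independent of $t$, and Gronwall on $\tau\in[-1,1]$ together with density of $\cinfty(\T^d)$ yields $\sup_t\|e^{\im\tau G(t)}\|_{\sigma,\sigma}\le e^{C}$. The case $\sigma<0$ follows by duality from $(e^{\im\tau G(t)})^\ast=e^{-\im\tau G(t)}$; in particular $e^{\pm\im\tau G(t)}$ preserves $\cinfty(\T^d)$.

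For the Egorov expansion I would start from the observation that, acting on $\cinfty(\T^d)$, the conjugate $A(\tau,t):=e^{\im\tau G(t)}A(t)e^{-\im\tau G(t)}$ is $C^\infty$ in $\tau$ and solves the Heisenberg equation $\partial_\tau A(\tau,t)=\im[G(t),A(\tau,t)]$, so $\partial_\tau^j A(\tau,t)=(\im)^j e^{\im\tau G(t)}(\textrm{Ad}_G^j A)(t)e^{-\im\tau G(t)}$. Since commuting with $G\in\OPS^\eta$ maps $\OPS^{m'}$ to $\OPS^{m'+\eta-\delta}$ and preserves $\timereg$-regularity, one has $\textrm{Ad}_G^j A\in\timereg(\R;\OPS^{m+j(\eta-\delta)})$, with orders \emph{strictly decreasing} because $\eta<\delta$. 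By the standard asymptotic-summation procedure for symbols I then fix $z\in\timereg(\R;\simb^m)$, smooth in $\tau$, whose Taylor expansion at $\tau=0$ has coefficients $\tfrac{(\im)^j}{j!}$ times the symbols of $\textrm{Ad}_G^j A$, truncated so that the $N$-th remainder lies in $\timereg(\R;\simb^{m+N(\eta-\delta)})$; equivalently, $z$ solves to infinite order in the symbol hierarchy the transport equation obtained from the Heisenberg equation at the symbol level, whose generator (the symbol-level form of $\im[G,\cdot\,]$) has order $\eta-\delta<0$. The key structural point is that $z$ is manufactured by purely symbolic operations (Moyal brackets, asymptotic sum) out of the symbols of $G$ and $A$, which depend smoothly on $t$ with uniformly bounded seminorms; hence so does $z$. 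This is the mechanism by which the $\timereg$-regularity is obtained \emph{without} ever differentiating the non-$t$-differentiable unitary $e^{\im\tau G(t)}$.

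It then remains to identify $A(\tau,t)$ with $\Op(z(\tau,t))$ modulo a genuinely regularizing operator. By construction $\Op(z(\tau,t))$ solves the Heisenberg equation modulo a regularizing error, so $E(\tau,t):=A(\tau,t)-\Op(z(\tau,t))$ solves the same Cauchy problem with regularizing source and regularizing initial datum; by Duhamel's formula together with the uniform Sobolev bounds already established for $e^{\pm\im u G(t)}$, $E(\tau,t)$ maps $H^\sigma$ into $H^{\sigma'}$ for all $\sigma,\sigma'$, uniformly in $t$ and $\tau\in[-1,1]$. On the compact torus an operator with this property has Fourier matrix elements decaying faster than any power in both indices, hence belongs to $\OPS^{m'}$ for every $m'$; rerunning the argument for the $t$-derivatives (using that $[\partial_t^K G,\cdot\,]$ sends regularizing operators to regularizing operators) gives $E\in\timereg(\R;\OPS^{m'})$ for every $m'$. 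Therefore $A(\tau,t)=\Op(z(\tau,t))\in\timereg(\R;\OPS^m)$, and truncating the Taylor expansion of $z(\tau,t)$ at order $N$ yields the stated expansion with remainder in $\timereg(\R;\OPS^{m+N(\eta-\delta)})$.

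The genuinely delicate step is the last one: upgrading ``bounded on Sobolev spaces'' to ``pseudo-differential of order $m$ with \emph{all} $t$-derivatives having uniformly bounded seminorms''. It is subtle precisely because, as noted after Definition \ref{def.smooth}, $t\mapsto e^{\im\tau G(t)}$ is not differentiable as a map into any $\cB(H^\sigma)$, so one cannot naively differentiate the conjugation formula; the way out is to never differentiate the unitary but only the symbol $z$ (whose $t$-dependence is manifestly smooth) and the regularizing remainder $E$ (whose $t$-derivatives satisfy Heisenberg-type equations with regularizing data), and to use crucially the strict inequality $\eta<\delta$, which makes every iterated commutator $\textrm{Ad}_G^j A$ a strictly lower order operator and hence makes all the above remainders converge in the required sense.
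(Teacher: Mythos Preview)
The paper does not give its own proof of this lemma: it simply refers the reader to Lemma~3.2 of \cite{growth} (Bambusi--Gr\'ebert--Maspero--Robert). There is therefore nothing in the paper to compare your argument against step by step. That said, your sketch is a faithful, self-contained rendition of the standard Egorov argument in the $S^m_{\delta,0}$-type class used here: Stone's theorem for unitarity, an energy estimate plus Gronwall for the uniform $H^\sigma$-boundedness (using that $[\langle D\rangle^{2\sigma},G]\in\OPS^{2\sigma+\eta-\delta}$), Taylor expansion of the Heisenberg evolution together with the commutator calculus $[\OPS^{\eta},\OPS^{m'}]\subset\OPS^{m'+\eta-\delta}$, an asymptotic sum for the parametrix symbol $z$, and Duhamel plus a Beals-type characterization of $\OPS^{-\infty}$ on $\T^d$ for the remainder. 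This is essentially the line of argument in \cite{growth} (and, for the time-independent part, in the references \cite{nonris,risonante}).

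The one place where a complete write-up would need more care is the $\timereg$-regularity in $t$ of the regularizing remainder $E$. ``Rerunning the argument for the $t$-derivatives'' is the right idea but hides the mechanism: differentiate the Duhamel representation of $E$ in $t$ and use that $\partial_t^\alpha e^{\pm\im\tau G(t)}\in L^\infty(\R;\cB(H^\sigma,H^{\sigma-\alpha\eta}))$ (this is precisely the content of the remark following the lemma). Every such derivative of the unitary lands against a genuinely regularizing factor coming from the initial datum $E_0$ or the source term, so the finite loss of $\alpha\eta$ derivatives is absorbed and each $\partial_t^K E$ remains smoothing with uniformly bounded seminorms. Your diagnosis that one must never differentiate $e^{\im\tau G(t)}$ as a map into $\cB(H^\sigma)$, but only the symbolically constructed $z$ and the smoothing $E$, is exactly the point.
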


	\section{Proof of Theorem \ref{norm.form}}\label{sec.details}
	{In this appendix, again for sake of completeness, we
          show how to adapt the methods of \cite{nonris} to the time
          dependent case.}

{          The first point is to find a family of operators} $G(t) \in
          \OPS^{\eta}$ with $\eta < \min\{m, \delta\}$ s.t. the second
          line of Eq. \eqref{h.pl.a.pezzi} turns out to be in normal form. 
          To construct it, we work at the level of symbols, namely we set $G(t) = \Op(g(t))$ and $V(t)=\Op(v(t))$ and observe that
		\begin{equation}\label{i.want.to.be.in.normal.form}
		- \im
		[-\Delta_{g}; G(t)] + V(t) = \Op\left( \{\|\csi\|^2; g(t)\} + v(t)\right)\,.
		\end{equation}
		{Then we proceed by decomposing $v$ into normal
                  form, non-resonant and smoothing parts. To this aim,}
		consider an even smooth cutoff function $\chi: \R \rightarrow [0, 1]$ with the property that {$\chi(y) = 1$} for all $y$ with $|y| \leq
		\frac{1}{2}$ and {$\chi(y) = 0$} for all $y$ with $|y| \geq 1.$ 
		
			Given $\ep, \delta > 0$ and $\tau > d-1$ fulfilling \eqref{legami.tra.parametri}, define the following functions:
			\begin{align*}
			\chi_k(\csi) = \chi\left(\frac{2 \|k\|^\tau (\csi, k) }{\langle
				\csi \rangle^\delta}\right)\ , \quad  k &\in \Z^d\backslash
			\{0\},
			\\
			\tilde{\chi}_k(\csi) = \chi \left(\frac{\|k\|}{\langle \csi 
				\rangle^\ep}\right)\ , \quad k &\in \Z^d\backslash \{0\}\ .
			\end{align*}
			Correspondingly, given a symbol $w \in \timereg\left(\R^; S^{m}_\delta\right)$, we decompose
			it as follows:
			\begin{equation}\label{w a pezzi}
			w = \langle w \rangle +  w^{(\rm nr)} + w^{(\rm res)} + w^{(S)}\,,
			\end{equation}
			where 
			\begin{equation}\label{decomp} 
			\begin{gathered}
			\langle w \rangle (t, \csi) = \frac{1}{\mu(
                          \T^d)}\int_{\T^d} w (t, x, \csi)\ d \mu(x)\ ,
                        \\
                          w^{(\rm res)}(t, x, \csi) = \sum_{k \neq 0} \hat{w}_k(t, \csi) \chi_k(\csi) \tilde{\chi}_k(\csi) e^{\im k \cdot x}\,,\\
			w^{(\rm nr)}(t, x, \csi) = \sum_{k \neq 0} \hat{w}_k(t, \csi) \left(1 - \chi_k(\csi)\right) \tilde{\chi}_k(\csi) e^{\im k \cdot x}\,,\\
			w^{(S)}(t, x, \csi) = \sum_{k \neq 0} \hat{w}_k(t, \csi) \left(1 - \tilde{\chi}_k(\csi)\right) e^{\im k \cdot x}\,.
			\end{gathered}
			\end{equation}

		\begin{lemma}[Lemma 5.6 of \cite{nonris}]\label{lem.dec.ok}
			Let $w \in \timereg\left(\R; \simb^{m}\right)$ for some $m \in \R$; then $\langle w
			\rangle\,, w^{(\rm nr)}\,, w^{(\rm res)} \in \timereg\left(\R; \simb^{m}\right)$, and $w^{(S)} \in \timereg\left(\R; \simb^{-\infty}\right)$. Furthermore, if $w$ is real valued, also $w^{\res}, w^{\nr}, w^{(S)}$ and $\langle w \rangle$ are real valued. 
		\end{lemma}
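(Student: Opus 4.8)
The plan is to reduce everything to the corresponding statement for symbols without time dependence, which is exactly Lemma 5.6 of \cite{nonris}, and then to track the time derivatives through the construction. The key observation is that the cutoff functions $\chi_k(\csi)$ and $\tilde\chi_k(\csi)$ are \emph{independent of $t$} and of $x$: they depend only on $\csi$ (and on $k$). Consequently the four operations $w \mapsto \langle w\rangle$, $w\mapsto w^{(\rm nr)}$, $w\mapsto w^{(\rm res)}$, $w\mapsto w^{(S)}$ all commute with $\partialt^K$ for every $K$, since differentiation in $t$ acts only on the Fourier coefficients $\hat w_k(t,\csi)$. Thus for any $K$ one has $\partialt^K(\langle w\rangle) = \langle \partialt^K w\rangle$, $\partialt^K(w^{(\rm res)}) = (\partialt^K w)^{(\rm res)}$, and similarly for the other two pieces.

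The first step is therefore to record this commutation, and then to apply the time-independent Lemma 5.6 of \cite{nonris} to the symbol $\partialt^K w(t,\cdot,\cdot)$ for each fixed $t$ and each $K \in \N$. Since $w \in \timereg(\R;\simb^m)$ means precisely that $\partialt^K w \in \cinfty(\R;\simb^m)$ with all seminorms $C_{N_1,N_2}(\partialt^K w(t))$ bounded uniformly in $t$, Lemma 5.6 of \cite{nonris} (applied at fixed $t$) gives that $\langle \partialt^K w(t)\rangle$, $(\partialt^K w(t))^{(\rm nr)}$, $(\partialt^K w(t))^{(\rm res)}$ lie in $\simb^m$ and $(\partialt^K w(t))^{(S)} \in \simb^{-\infty}$, with seminorms controlled by a fixed finite collection of the seminorms $C_{N_1,N_2}(\partialt^K w(t))$, hence uniformly bounded in $t$. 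Combining this with the commutation identities of the previous step yields $\partialt^K(\langle w\rangle), \partialt^K(w^{(\rm nr)}), \partialt^K(w^{(\rm res)}) \in L^\infty(\R;\simb^m)$ and $\partialt^K(w^{(S)}) \in L^\infty(\R;\simb^{-\infty})$ for every $K$, which is exactly the statement that $\langle w\rangle, w^{(\rm nr)}, w^{(\rm res)} \in \timereg(\R;\simb^m)$ and $w^{(S)}\in\timereg(\R;\simb^{-\infty})$.

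The reality statement is immediate and requires no time-dependent analysis: since $\chi$ is real-valued and even, $\chi_k(\csi)$ and $\tilde\chi_k(\csi)$ are real and satisfy $\chi_k(\csi)=\chi_{-k}(\csi)$, $\tilde\chi_k(\csi)=\tilde\chi_{-k}(\csi)$; a symbol $w$ is real-valued iff $\overline{\hat w_k(t,\csi)} = \hat w_{-k}(t,\csi)$ for all $k$, and this relation is manifestly preserved by each of the four truncations in \eqref{decomp}--\eqref{average}, which proves that $w^{(\rm res)}, w^{(\rm nr)}, w^{(S)}, \langle w\rangle$ are all real-valued whenever $w$ is.

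I do not expect a genuine obstacle here: the only point requiring a little care is the verification that the seminorm bounds supplied by Lemma 5.6 of \cite{nonris} are uniform in $t$, which follows because that lemma bounds the output seminorms by a fixed finite set of input seminorms with constants independent of the symbol, so uniformity in $t$ of the input (built into the definition of $\timereg$) transfers automatically to the output. Everything else is bookkeeping of the commutation between $\partialt$ and the $\csi$-only multipliers.
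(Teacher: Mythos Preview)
Your proposal is correct. The paper does not give its own proof of this lemma: it simply records it as the time-dependent version of Lemma~5.6 of \cite{nonris} and cites that reference. Your reduction to the time-independent statement via the observation that the cutoffs $\chi_k$, $\tilde\chi_k$ are $t$-independent (so that $\partial_t^K$ commutes with each of the four truncations), followed by applying the cited lemma pointwise in $t$ with uniform seminorm control, is exactly the implicit argument the paper relies on; the reality check via $\chi_{-k}=\chi_k$, $\tilde\chi_{-k}=\tilde\chi_k$ is likewise the intended one.
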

		Then, in order to reduce to normal form the contribution of \eqref{i.want.to.be.in.normal.form}, one is left to solve the equation
		\begin{equation}\label{hom.sweet.hom}
		\{\| \csi\|^2; g\} + v^{\nr} = 0\,,
		\end{equation}
		which is dealt with in the following lemma:
		\begin{lemma}[Lemma 5.8 of \cite{nonris}]\label{lem.hom.eq}
			Let $w \in \timereg\left(\R; \simb^{m}\right)$ for some $m \in \R$; then
			\begin{equation}
			g(t, x, \csi) := \sum_{k \in \Z^d} \frac{\hat{w}^{\nr}_k(t, \csi)}{2 {\im}\scal{\csi}{k}} e^{\im k \cdot x}
			\end{equation}
			solves \eqref{hom.sweet.hom}, and $g \in
                        \timereg\left(\R; \simb^{m-\delta}\right)\,.$
                        Furthermore, if $w$ is real valued, then $g$ is real valued.
		\end{lemma}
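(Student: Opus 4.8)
The plan is to solve \eqref{hom.sweet.hom} (read with $w$ in place of $v$) one Fourier mode at a time in $x$, and then to read off the symbol estimates from the support properties that the cutoff $\chi_k$ of Definition \ref{rmk dec symb} imposes on $w^{\nr}$. Since $\|\csi\|^2$ does not depend on $x$, the bracket $\{\|\csi\|^2;\cdot\}$ acts diagonally on Fourier modes, $\{\|\csi\|^2;e^{\im k\cdot x}\}=2\im\scal{\csi}{k}\,e^{\im k\cdot x}$ (a one-line computation from $\partial_{\csi_A}\|\csi\|^2=2\sum_B g^{AB}\csi_B$; no higher order corrections appear, $\|\csi\|^2$ being quadratic in $\csi$). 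Writing $g=\sum_k\hat g_k(t,\csi)\,e^{\im k\cdot x}$ and recalling that $w^{\nr}=\sum_{k\ne0}\hat{w}^{\nr}_k\,e^{\im k\cdot x}$ with $\hat{w}^{\nr}_k=\hat w_k(1-\chi_k)\tilde\chi_k$, equation \eqref{hom.sweet.hom} decouples into the scalar relations $2\im\scal{\csi}{k}\,\hat g_k=-\hat{w}^{\nr}_k$. For $k=0$ one has $\hat{w}^{\nr}_0\equiv0$, consistently with $\hat g_0=0$; for $k\ne0$, the factor $1-\chi_k$ confines $\hat{w}^{\nr}_k$ to the set where $2\|k\|^{\tau}|\scal{\csi}{k}|\langle\csi\rangle^{-\delta}\ge\tfrac12$ (because $\chi\equiv1$ on $[-\tfrac12,\tfrac12]$), that is, where
\[
|\scal{\csi}{k}|\ \ge\ \tfrac14\,\langle\csi\rangle^{\delta}\,\|k\|^{-\tau}\ >\ 0\,.
\]
Dividing on this set, and putting $\hat g_k=0$ off it, produces exactly the symbol displayed in the statement and shows at once that it solves \eqref{hom.sweet.hom}.

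Next I would prove $g\in\timereg(\R;\simb^{m-\delta})$. By Lemma \ref{lem.dec.ok} one has $w^{\nr}\in\timereg(\R;\simb^{m})$, so integration by parts in $x$ yields, for all $K,N_1,N_2$, the bound $\sup_{t}|\partial_t^{K}\partial_\csi^{N_2}\hat{w}^{\nr}_k(t,\csi)|\le C_{K,N_1,N_2}\langle k\rangle^{-N_1}\langle\csi\rangle^{m-\delta N_2}$. Since $\scal{\csi}{k}$ is linear in $\csi$, on the support of $\hat{w}^{\nr}_k$ one has $|\partial_\csi^{j}\scal{\csi}{k}^{-1}|\lesssim\|k\|^{j}|\scal{\csi}{k}|^{-(j+1)}\lesssim\|k\|^{\,j+(j+1)\tau}\langle\csi\rangle^{-(j+1)\delta}$. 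As the divisor does not depend on $t$, $\partial_t^{K}\hat g_k=-(\partial_t^{K}\hat{w}^{\nr}_k)/(2\im\scal{\csi}{k})$, and a Leibniz expansion of $\partial_\csi^{N_2}\hat g_k$ bounds each summand by $C_{K,N_1}\langle k\rangle^{-N_1}\,\|k\|^{\,N_2(\tau+1)+\tau}\,\langle\csi\rangle^{(m-\delta)-\delta N_2}$; the key point is that the $\langle\csi\rangle$-exponent is always exactly $(m-\delta)-\delta N_2$, because each extra $\csi$-derivative falls either on $\hat{w}^{\nr}_k$, costing $\langle\csi\rangle^{-\delta}$, or on $\scal{\csi}{k}^{-1}$, again costing $\langle\csi\rangle^{-\delta}$ together with a power of $\|k\|$. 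Choosing $N_1$ large absorbs every power of $\|k\|$ into the rapid decay, giving $\sup_t|\partial_t^{K}\partial_\csi^{N_2}\hat g_k|\le C_{K,N_1,N_2}\langle k\rangle^{-N_1}\langle\csi\rangle^{(m-\delta)-\delta N_2}$ for all $N_1$; then $\sum_k\|k\|^{N_1'}|\partial_t^{K}\partial_\csi^{N_2}\hat g_k|$ converges for $N_1>N_1'+d$ and gives $\sup_{t,x}|\partial_t^{K}\partial_x^{N_1'}\partial_\csi^{N_2}g|\lesssim\langle\csi\rangle^{(m-\delta)-\delta N_2}$, that is $g\in\timereg(\R;\simb^{m-\delta})$.

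Finally, if $w$ is real then $w^{\nr}$ is real by Lemma \ref{lem.dec.ok}, i.e.\ $\overline{\hat{w}^{\nr}_k}=\hat{w}^{\nr}_{-k}$; combined with $\overline{2\im\scal{\csi}{k}}=2\im\scal{\csi}{-k}$ this yields $\overline{\hat g_k}=\hat g_{-k}$, so $g$ is real valued. I expect the only genuine work to be the bookkeeping of the second paragraph: one has to verify that the small-divisor lower bound $|\scal{\csi}{k}|\gtrsim\langle\csi\rangle^{\delta}\|k\|^{-\tau}$ --- and nothing sharper --- produces precisely the decay rate $\delta$ required by $\simb^{m-\delta}$, while the polynomial-in-$\|k\|$ losses generated by differentiating $\scal{\csi}{k}^{-1}$ are harmless, since $\hat{w}^{\nr}_k$ decays faster than any power of $k$. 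Everything else is routine.
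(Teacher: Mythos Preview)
The paper does not reproduce a proof of this lemma at all: it simply cites Lemma 5.8 of \cite{nonris} and moves on. Your argument is the standard one and is correct---solve \eqref{hom.sweet.hom} Fourier mode by mode, use the lower bound $|\scal{\csi}{k}|\gtrsim\langle\csi\rangle^{\delta}\|k\|^{-\tau}$ guaranteed on $\operatorname{supp}(1-\chi_k)$ to control the small divisor, and absorb all powers of $\|k\|$ into the rapid decay of $\hat w_k$. The bookkeeping in your second paragraph is right: each $\csi$-derivative falling on either factor contributes exactly $\langle\csi\rangle^{-\delta}$, and the Leibniz terms are all supported where the divisor bound holds because $\operatorname{supp}\partial_\csi^{\alpha}\hat w^{\nr}_k\subseteq\operatorname{supp}\hat w^{\nr}_k$.

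One small remark: your formula $\hat g_k=-\hat w^{\nr}_k/(2\im\scal{\csi}{k})$ differs from the displayed formula in the statement by a factor of $\im$. Your version is the one consistent with the real-valuedness claim (and with your own check $\overline{2\im\scal{\csi}{k}}=2\im\scal{\csi}{-k}$); the formula as printed would actually give a purely imaginary $g$. So this is a harmless typo in the statement rather than an error in your proof, but you should flag it rather than assert that dividing ``produces exactly the symbol displayed in the statement.''
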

		This leads to the proof of Theorem \ref{norm.form}:
		\begin{proof}[Proof of Theorem \ref{norm.form}]
			The result is obtained arguing by induction.
			When ${N=0,}$ $H(t) = H_0(t)$ is of the form
                        \eqref{eq.in.forma}, with $\NForm^{(0)} = 0$
                        and ${R}_0(t) = V(t)$, thus the zero-th step of the induction is immediately satisfied with $U_0 =\id$. Suppose that the thesis holds for some $N\in \N$: then one looks for a family of self-adjoint pseudo-differential operators
			$G_N(t)$ such that {$ e^{\im G_N(t)}$ conjugates $H_N(t) = \widetilde{H}^{(N)}(t) + R_N(t)$ satisfying properties $1$ -- $3$ of Theorem \ref{norm.form} to an operator $H_{N+1}(t)$ satisfying the same properties with $N$ replaced by $N+1$.}
			For all  $t \in \R$, let $r_N(t)$ be the
                        symbol of $R_N(t)$ and decompose $r_N(t)$
                        according to \eqref{w a pezzi}, namely
			$$
			r_N(t) = \langle r_N \rangle(t) + r_N^{\res}(t) + r_N^{\nr}(t) + r_N^{(S)}(t)\,.
			$$
			One then sets $G_N(t) = \Op(g_N(t)),$ where
                        $g_N$ is the solution of the homological
                        equation \eqref{hom.sweet.hom} with $w^{\nr} =
                        r_N^{\nr}$. By Lemma \ref{lem.hom.eq}, $G_N
                        \in \timereg\left(\R; {\OPS^{m -\rho N
                            -\delta}}\right)$ and $G_N(t)$ is
                        self-adjoint for all $t \in \R$. By Lemmas
                        \ref{quantum.lie}, \ref{egorov}, {one has that
                          $e^{\im G_N(t)}$ conjugates $H_N(t)$ \red{(in the sense of Definition \ref{coniuge})} to 
			$$
			\begin{aligned}
			H_{N+1}(t) &= -\Delta_g + Z_{N+1}(t) + R_{N+1}(t)\,,
			\end{aligned}
			$$}
with{
\begin{align}
  \label{rn}
  R_{N+1}(t)&=e^{\im G_N(t)} (-\Delta_g) e^{-\im G_N(t)} - (-\Delta_g) + \im [-\Delta_g, G_N(t)]\\
  &+ e^{\im G_N(t)} Z_N(t) e^{-\im G_N(t)}  - Z_N(t)\\
  & - \int_{0}^{1} e^{\im \tau G_N(t)} \partialt(G_N(t)) e^{-\im \tau G_N(t)}\ d \tau\\
  &+ e^{\im G_N(t)} R_N(t) e^{-\im G_N(t)}  - R_N(t)+ \Op(r_N^{(S)}(t))\,,
  \\
  Z_{N+1}(t) &=  Z_N(t) + \Op(\langle r_N \rangle(t)) + \Op(r_N^{\res}(t))\,,
\end{align}}
so that, in particular $R_{N+1}\in \timereg\left(\R;
\OPS^{m-{\rho}(N+1)}\right)\,$. Then the thesis immediately follows.
	\end{proof}


\def\cprime{$'$}

\end{document}